\documentclass{article}
\usepackage[utf8]{inputenc}
\usepackage[margin=0.5in]{geometry}
\usepackage{import}
\usepackage{graphicx}
\usepackage{caption}
\usepackage{subcaption}
\usepackage{enumerate}
\usepackage{hyperref}
\usepackage{tikz}
\usepackage{amssymb}
\usepackage{amsmath}
\usepackage{amsthm}
\usepackage{fp}	
\usepackage{xcolor,multirow}
\usepackage{enumitem}
\usepackage{xcolor}
\usepackage{hyperref}
\hypersetup{colorlinks=true, citecolor=black,linkcolor=blue, urlcolor=blue}
\newtheorem{thm}{Theorem}[section]

\newtheorem{cor}[thm]{Corollary}

\newtheorem{conj}[thm]{Conjecture}

\numberwithin{equation}{section}

\title{A Structural Approach to Burning Number}
\author{Jean Guillaume$^1$ and Tyriana Williams$^2$}

\begin{document}

\maketitle

\begin{abstract}
Graph burning is a deterministic discrete-time process that models the propagation of information within a network as a set of fires that spread in a graph. The associated graph parameter, the burning number, measures the speed of that spread. The smaller the burning number of a graph is, the faster information spreads in the corresponding network. In this paper, we mainly study the burning number from a structural point of view. In particular, we structurally characterize all graphs with burning number 2. We then proceed to compute the burning number of split graphs and $\{ K_2+2K_1, P_4, C_4\}$-free graphs. In particular, we show that connected split graphs and connected $\{ K_2+2K_1, P_4, C_4\}$-free graphs are well-burnable. Lastly, we provide a blueprint on how to investigate the burning number of a given social network. 
\end{abstract}

\section{Introduction}

Configurations of nodes and connections emerge naturally in various applications, including physical networks (electrical circuits, street networks, organic molecules, and so on) and social networks such as Facebook. Graph theory, the branch of mathematics that studies these types of configurations, formally known as \emph{graphs,} provides the fundamental framework that allows for the modeling and analysis of these networks. In return for its applications and implications on social network analysis, the mathematical product engendered by the K\"{o}nigsberg Bridge Problem (the problem that is often said to have been the birth of graph theory) has in recent years come out of the shadow of combinatorics to take a life of its own. Indeed, graph theory has become an invaluable contributor to understanding social networks which include groups of interacting individuals, organizations, or entities. Figure~\ref{Facebook} depicts a Facebook network comprising 50 users (vertices) and pairwise interactions (edges) amongst them.

\begin{figure}[h]
\centerline{\includegraphics[scale=.30]{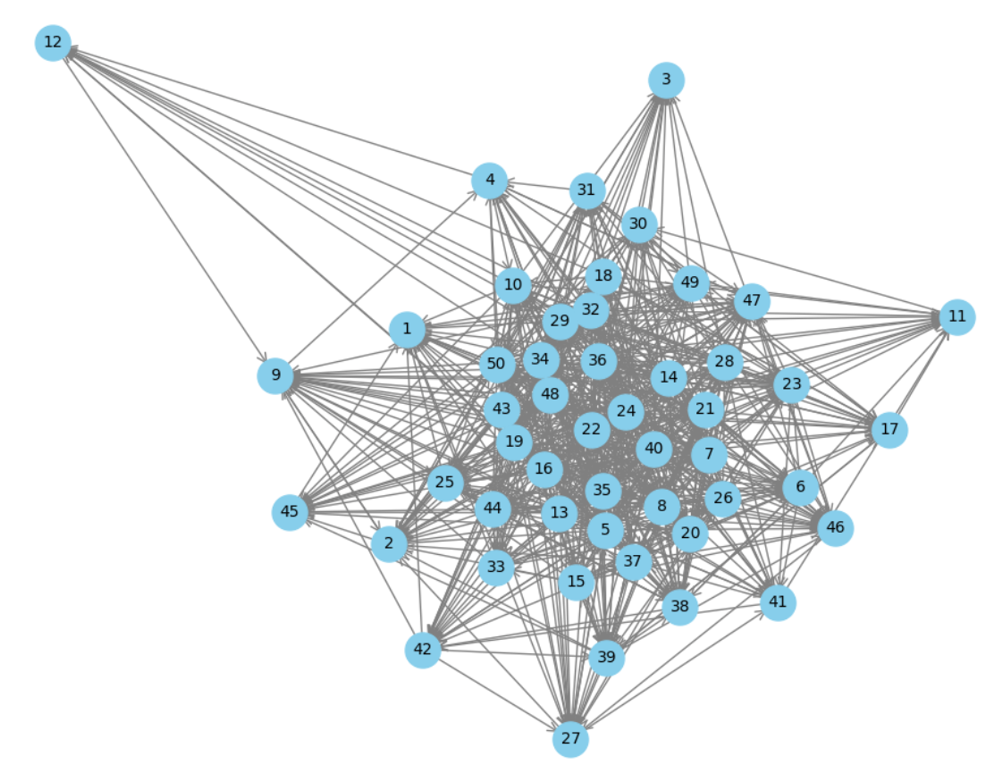}}
\caption{Small Facebook network of 50 people \label{Facebook}} 
\end{figure} 

Mainly due to its preeminent rise in popularity, social media has long played a dominant role in the spread of information and in the design of viral market strategies. In fact, one of the current active topics in social network analysis is the spread of social influence~\cite{Bonato1}. According to a recent study on the spread of emotional contagion in Facebook~\cite{Facebookstudy}, the underlying network plays an essential role as a medium for the spread of information, ideas, and influence among its members. In particular, in-person interaction and nonverbal cues are not strictly necessary for the spread. A specific example is the spread of online memes on social media platforms such as Instagram, Facebook, and X. Influence spreads from a node (an agent) to each of its neighbors (followers). Moreover, though the influence originates from a single source node, new sources    of influence emerge over time in various locations in the network. Operating under these assumptions, Bonato et al. recently proposed in~\cite{Bonato1} a graph theoretic process called \emph{graph burning} for analyzing the rate of the information spread over a network. Note that the information spread is considered as a fire spread over the network. 

We now define the burning process on a \emph{finite, simple, undirected} graph. Initially, all the vertices of the given graph are unburned. We then proceed to burn all the vertices in discrete time-steps or rounds. In each round, not only is a new fire started by burning a new unburned node but also all vertices adjacent to a previously burned vertex (an existing fire) will start burning as well. If a node is burned, it remains in that state until the end of the process, which is when all nodes are burned. The process ends when all vertices of $G$ are burned. Note that for each round, an available unburned node must be chosen as a new source of fire. Two visual illustrations of the burning process are given in figure~\ref{Spider} using the same graph, known as a \emph{spider} in graph theory circles. The burning source chosen in the $i$-th round is denoted $x_i$ and the consecutively chosen sources of fire form what is called a \emph{burning sequence.}

\begin{figure}[h!]
    \centering 

    \begin{tikzpicture}
 
        \node (0) [draw, shape= circle, fill=black, scale=.4] at (0,0) {};
        \node (1) [draw, shape= circle, fill=black, scale=.4] at (0,2) {};
        \node (2) [draw, shape= circle, fill=black, scale=.4] at (2,0) {};
        \node (3) [draw, shape= circle, fill=black, scale=.4] at (-2,0) {};
        \node (4) [draw, shape= circle, fill=black, scale=.4] at (0,-2) {};
        \node (5) [draw, shape= circle, fill=black, scale=.4] at (2,-2) {};
        
        \draw (4)--(0) --(2)--(5);
        \draw (1) --(0)--(3);
        \draw (2, -2) circle (0.2);
        \node at (2.5,-2) {$x_1$};
        \draw (0, 0) circle (0.2);
        \node at (0.4,0.3) {$x_2$};

        \draw (0, 2) circle (0.2);
        \node at (0.5, 2) {$x_3$} ;
        \node at (0, -3) {burning process (a)} ;

        \node (a) [draw, shape= circle, fill=black, scale=.4] at (8,0) {};
        \node (b) [draw, shape= circle, fill=black, scale=.4] at (8,2) {};
        \node (c) [draw, shape= circle, fill=black, scale=.4] at (10,0) {};
        \node (d) [draw, shape= circle, fill=black, scale=.4] at (6,0) {};
        \node (e) [draw, shape= circle, fill=black, scale=.4] at (8,-2) {};
        \node (f) [draw, shape= circle, fill=black, scale=.4] at (10,-2) {};
        
        \draw (e)--(a) --(c)--(f);
        \draw (b) --(a)--(d);
        \draw (10, -2) circle (0.2);
        \node at (10.5,-2) {$x_2$};

        \draw (8, 0) circle (0.2);
        \node at ((8.5, 0.3) {$x_1$} ;
        \node at (8, -3) {burning process (b)} ;
    
     \end{tikzpicture}
    \caption{Illustration of the burning process}
    \label{Spider}
\end{figure}

These two illustrations highlight the fact that the burning process may have a lot to do with the choice of the burning sources. For example, at the end of the second round of the burning process (a), there are still three remaining nodes left to be burned, namely the pendant vertices not labeled $x_1.$ Since a new fire must be started in each round, we choose one of these three unburned vertices to be $x_3.$ This leads to the length of the burning sequence in (a) to be 3 while the length in (b) is 2. In fact, depending on the round in which a vertex is selected as a source, the process may be shorter or longer. Therefore, if the goal is to complete the process in the least number of rounds possible, resulting in an optimal burning sequence, then these burning sources must be strategically chosen. The \emph{burning number} of a graph $G,$ denoted $b(G),$ is the minimum number of steps required to burn all the vertices of $G.$ In other words, the burning number $b(G)$ is the length of an optimal burning sequence. Equivalently, the burning number measures the optimal speed at which a graph can be burned. The lower the burning number is, the faster the fire spreads. Also, it follows immediately from the definition that for any graph with two or more vertices, the burning number is greater or equal to 2.
The illustration on the left, labeled burning process (a),  shows a burning process comprising three sources fire. It is worth noting  Observe that the length of the sequence in the burning process (a) is 3 while the length in (b) is 2. In fact, depending on the round in which a vertex is selected as a source, the process may be shorter or longer. Therefore, if the goal is to complete the process in the least number of rounds possible, resulting in an optimal burning sequence, then these burning sources must be strategically chosen. The \emph{burning number} of a graph $G,$ denoted $b(G),$ is the minimum number of steps required to burn all the vertices of $G.$ In other words, the burning number $b(G)$ is the length of an optimal burning sequence. Equivalently, the burning number measures the optimal speed at which a graph can be burned. The lower the burning number is, the faster the fire spreads. Also, it follows immediately from the definition that for any graph with two or more vertices, the burning number is greater or equal to 2.

It is worth noting that a similar process on $n$-dimensional hypercubes first appeared in~\cite{Transmission}. The motivation then was a transmission problem from Intel, formulated by Aron, and they proved that the burning number of an $n$-dimensional hypercube is $\lceil \frac{n}{2}\rceil +1.$ This process was later independently defined by Bonato et al. and, this time, the motivation comes from the preeminent role that social networks play in spreading information (news, memes, opinions, trends, etc.) Since then, the topic of graph burning has garnered a great deal of attention, giving rise to a significant body of work. Bonato put the count at over two dozen papers in~\cite{Survey}, dedicated mainly to bounding the number either in general or  in the case of specific graph classes and on the computational complexity of the graph burning process. Note that the process is even \textbf{NP}-complete for fairly restrictive graph classes such as trees of maximum degree 3, according to~\cite{Bonato2}. 

In this paper, we consider a somewhat different approach to the burning number problem. More specifically, we seek to determine the kinds of graph structure(s) that are allowed under a specific burning number. To this aim, we were able to give in section 3 a characterization of all graphs with burning number 2. We then proceed to show that \emph{connected split} graphs are \emph{well-burnable} as well as \emph{connected} $\{ K_2+2K_1, P_4, C_4\}$-free graphs. That is, the \emph{burning number conjecture} holds for these types of graphs. Lastly, in section 4, we highlight the work of an undergraduate research mentee, Tyriana Williams, in her effort to burn social media networks.  

Before we can achieve these objectives, we first need some definitions and results. Also, in what follows, we consider all graphs to be non-trivial, simple, finite, and undirected, unless stated otherwise.

\section{Preliminaries}

In this section, we present relevant definitions, related results, and the main conjecture as well as introduce some necessary notation and terminology. 

We begin with some basic definitions and terminology from graph theory. A $u, v$-path is a trail in a graph which does not repeat any vertex and whose endpoints are vertices $u$ and $v$. The length of a path is its number of edges. The distance $d(u, v)$ from vertex u to vertex $v$ in $G$ is the length of a shortest path from $u$ to $v$. The eccentricity of a vertex $v$ is defined as $\max\{ d(v, u) : u \in V(G) \}$, where $V(G)$ denotes the set of vertices of $G$. The radius of a graph $G$ is the minimum eccentricity over the set of all nodes in $G$. The center of $G$ consists of all nodes in $G$ with minimum eccentricity. The diameter of a graph $G$ is the maximum eccentricity over the set of all nodes in $G.$ In other words, the diameter is the maximum distance between any two vertices. Also, it is not hard to see that small diameters give rise to small burning numbers. Lastly, a subset of edges that connects all vertices of $G$ without forming any cycle (a closed path on at least three vertices) forms what is called a \emph{spanning tree} of $G.$ 

Bonato et al. claimed in~\cite{Bonato2} that ``burning a graph is hard." Nevertheless, they were able to bequeath us some of the key results on the topic of graph burning, namely the following in~\cite{Bonato1}:
\begin{itemize}
    \item[1.] \emph{Paths} play an important role in graph burning. 

    \begin{thm} \label{Pathresult} \emph{(\cite{Bonato1})} Let $G$ be a path $P_n$ or a cycle $C_n$ on $n$ vertices, we have that $b(G)= \lceil\sqrt{n} \rceil.$
    \end{thm}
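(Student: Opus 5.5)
We prove the two inequalities $b(G)\le \lceil\sqrt{n}\rceil$ and $b(G)\ge\lceil\sqrt{n}\rceil$ separately, treating paths first and then deducing the cycle case. The key observation for both is this: in a burning sequence $(x_1,\dots,x_k)$ of length $k$, the source $x_i$ chosen in round $i$ burns, by the end of round $k$, exactly the vertices within distance $k-i$ of $x_i$. Hence $G$ is burned in $k$ rounds precisely when the closed balls $N_{k-i}[x_i]$, $i=1,\dots,k$, cover $V(G)$. In a path or a cycle, a ball of radius $r$ contains at most $2r+1$ vertices.

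\emph{Lower bound.} If the sequence has length $k$, then the balls cover all $n$ vertices, so
\[
n\le \sum_{i=1}^{k}\bigl(2(k-i)+1\bigr)=\sum_{j=0}^{k-1}(2j+1)=k^2 .
\]
Thus $k\ge\sqrt{n}$, and since $k$ is an integer, $k\ge\lceil\sqrt n\rceil$. This argument works verbatim for $C_n$.

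\emph{Upper bound for paths.} Let $k=\lceil\sqrt n\rceil$, so $n\le k^2$. Write $P_{k^2}$ as $v_1,\dots,v_{k^2}$ and partition it into consecutive blocks of sizes $2k-1,2k-3,\dots,3,1$. Choose $x_i$ to be the middle vertex of the block of size $2(k-i)+1$. Its ball of radius $k-i$ is exactly that block, so all of $P_{k^2}$ is burned by round $k$. For $P_n$ with $n<k^2$, we use the same blocks after truncating the path at $v_n$. The only thing to check is validity: each chosen $x_i$ must exist and be unburned when it is chosen.
\begin{itemize}
\item If a chosen centre falls beyond $v_n$, replace it by $v_n$ or by any still-unburned vertex.
\item If no unburned vertex remains, then everything has already been burned earlier, which only helps, so fewer rounds suffice.
\end{itemize}
Disjointness of the blocks ensures that $x_i$ has not been reached by any earlier fire before round $i$. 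The point is that earlier fires spread by only $i-j$ steps by round $i$, which is less than $k-j$, so they stay inside their own blocks.

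\emph{Upper bound for cycles.} $C_n$ contains a spanning path $P_n$, and adding edges can only speed the spread of fire, so $b(C_n)\le b(P_n)$. One caveat: a source chosen for $P_n$ might already be burned in $C_n$ because of the extra edge. In that case we simply pick any unburned vertex, or finish early; coverage is unaffected. Combined with the lower bound, this gives equality.

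The main obstacle is this bookkeeping about legality of sources: the requirement that each round's new source be unburned, both in the truncated-path case and in the spanning-subgraph comparison. It is handled by the monotonicity remark that replacing a blocked source by an arbitrary unburned vertex, or terminating early, never destroys the covering property.
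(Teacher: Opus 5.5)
The paper does not prove this statement (it quotes it from Bonato et al.), and your proof is the standard argument from that source: the covering count $n\le\sum_{i=1}^{k}\bigl(2(k-i)+1\bigr)=k^2$, consecutive blocks of orders $2k-1,\dots,3,1$ in $P_{k^2}$, and the spanning path of $C_n$; it is correct. One step needs tightening. When the block containing $v_n$ has its centre beyond $v_n$, you must choose $v_n$ itself, because your closing claim that an arbitrary unburned replacement preserves coverage holds only for already-burned sources (as in your cycle step), not for centres that fall off the path. $v_n$ works: earlier fires stay in earlier blocks, so it is unburned, and its ball of radius $k-i$ covers the rest of that block. An arbitrary choice can fail: for $n=6$, choosing $v_1$ and then $v_5$ in rounds $2$ and $3$ leaves $v_6$ unburned.
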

    \item[2.] The burning number of a graph is related to other graph parameters such as its \emph{radius} and \emph{diameter.}
    
    \begin{thm}\label{bounds} \emph{(\cite{Bonato1})} For any graph $G$ with radius $r$ and diameter $d,$ we have that 
     $$\lceil (d+1)^{\frac{1}{2}} \rceil \leq b(G) \leq r+1$$
     \end{thm}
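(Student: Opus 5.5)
The plan is to prove the two inequalities separately. The upper bound will come from an explicit burning sequence. The lower bound will come from restricting attention to a diametral path and invoking Theorem~\ref{Pathresult}. Throughout, $G$ is connected, since otherwise radius and diameter are infinite and the statement is vacuous.

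For the upper bound $b(G)\le r+1$, I would pick a center vertex $c$, whose eccentricity is $r$, and set $x_1=c$. The fire started at $c$ in round $1$ reaches every vertex at distance $k$ from $c$ by the end of round $k+1$. So after round $r+1$, every vertex within distance $r$ of $c$ is burned, and that is all of $V(G)$. The one technicality is that each round must choose an unburned vertex as a new source. If at some round $i\le r+1$ no unburned vertex remains, the process has already ended with fewer than $r+1$ rounds. Otherwise we pick any unburned vertex, and extra sources only burn vertices sooner. In either case $b(G)\le r+1$.

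For the lower bound, I would take vertices $u,v$ with $d(u,v)=d$ and a shortest $u,v$-path $P$, which has $d+1$ vertices. Suppose $G$ is burned by a sequence $x_1,\dots,x_k$ with $k=b(G)$. The source $x_i$ is lit in round $i$, so by the end of round $k$ it has burned exactly the ball of radius $k-i$ around it. Every vertex of $P$ therefore lies within distance $k-i$ of some $x_i$. The key observation is that $P$ is a geodesic. Hence for any vertex $w$ of $G$, the vertices of $P$ within distance $k-i$ of $w$ lie between two vertices of $P$ at $P$-distance at most $2(k-i)$ apart. Indeed, if $p,q\in P$ are both within $k-i$ of $w$, then $d_P(p,q)=d_G(p,q)\le 2(k-i)$, because subpaths of shortest paths are shortest. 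So the $i$-th ball covers at most $2(k-i)+1$ vertices of $P$. Summing over $i$ gives
$$d+1\le \sum_{i=1}^{k}\bigl(2(k-i)+1\bigr)=k^2.$$
Thus $k\ge \sqrt{d+1}$, and since $k$ is an integer, $k\ge\lceil (d+1)^{1/2}\rceil$.

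The main obstacle I expect is this covering step. One cannot simply say that $b(G)\ge b(P)$ by quoting Theorem~\ref{Pathresult}, because the sources may lie off the path and fire may travel through shortcuts outside $P$. The geodesic property of $P$ is exactly what rules out such shortcuts being useful. The counting then reproduces the $k^2$ bound that underlies Theorem~\ref{Pathresult}.
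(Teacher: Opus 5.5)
The paper gives no proof of Theorem~\ref{bounds}; it simply quotes the result from Bonato et al., so there is no in-paper argument to compare against. Your proof is correct and is essentially the standard one. Burning a central vertex first gives $b(G)\le r+1$. For the lower bound you take a diametral geodesic $P$ on $d+1$ vertices and note that each ball $N_{k-i}[x_i]$ meets $P$ in a set of $P$-diameter at most $2(k-i)$, hence in at most $2(k-i)+1$ vertices. Summing gives $d+1\le k^2$.

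The usual presentation packages this step as $b(G)\ge b(P)$ for the isometric path $P$ and then cites Theorem~\ref{Pathresult}. Your direct count is the same idea made self-contained. Your remark that the geodesic property is what makes the comparison work is exactly right: for a non-geodesic path it fails, e.g.\ $K_n$ with $n\ge 5$ contains $P_n$ but $b(K_n)=2<\lceil\sqrt{n}\rceil$.

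One small correction: for a disconnected graph the lower bound is not vacuous but false, since it would assert $\lceil\sqrt{\infty}\rceil\le b(G)$. Connectivity is therefore a genuine hypothesis of the theorem, not a harmless convention, and the paper's ``for any graph'' should be read as ``for any connected graph''.
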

    
    \item[3.] The strategy to burn the \emph{spanning tree} of a graph $G$ still works when the remaining edges in $E(G)$ are added (i.e. \emph{Tree Reduction Theorem}).

    \begin{thm}\label{Reduction} \emph{(Tree Reduction Theorem,} \emph{\cite{Bonato1})} For a graph $G,$ we have that 
    $$b(G)= \emph{min}\{ b(T): T \emph{ is a spanning tree of } G\} $$ 
    \end{thm}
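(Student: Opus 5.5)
The plan is to prove the two inequalities separately. The inequality $b(G)\le b(T)$ holds for every spanning tree $T$. Conversely, $G$ has some spanning tree $T$ with $b(T)\le b(G)$. Together these give equality with the minimum, which exists because $G$ (assumed connected, as the theorem implicitly requires) has finitely many spanning trees, at least one.

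To make the argument clean, I will first recast burning in the standard distance-cover form. A sequence $(x_1,\dots,x_k)$ of vertices is a burning sequence of $G$ exactly when the closed balls $N_{k-i}[x_i]=\{v: d_G(v,x_i)\le k-i\}$ cover $V(G)$. Here $x_i$ must be unburned at round $i$, which is equivalent to $d_G(x_i,x_j)\ge i-j$ for all $j<i$. The reason is that a fire lit at round $i$ has spread exactly distance $k-i$ by the end of round $k$, by induction on rounds. There is also the usual convention that if the graph becomes fully burned early, the process stops, and a shorter sequence with covering balls can be padded or truncated. I would note that the definition of $b$ only needs the covering condition: if the balls cover, then some prefix is a legitimate burning sequence finishing within $k$ rounds. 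To see this, drop any $x_i$ already burned and keep going, since any vertex burned earlier is already covered.

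For $b(G)\le b(T)$, take an optimal burning sequence $(x_1,\dots,x_k)$ of $T$. Since $T$ is a spanning subgraph, $d_G(u,v)\le d_T(u,v)$, so each ball $N^T_{k-i}[x_i]$ is contained in $N^G_{k-i}[x_i]$. The balls in $G$ therefore still cover $V(G)$. The one subtlety is the unburned condition: in $G$ some $x_i$ may already be burned at round $i$. I would handle this by the observation above. Whenever $x_i$ is already burned in $G$, either the graph is finished, or we substitute any unburned vertex as the $i$-th source. This only adds burned vertices and cannot hurt coverage, since burned sets are monotone in added sources. So $G$ burns in at most $k$ rounds.

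For $b(T)\le b(G)$, take an optimal sequence $(x_1,\dots,x_k)$ of $G$ and assign each vertex $v$ to the smallest index $i$ with $d_G(v,x_i)\le k-i$; this is the fire that reaches it first. Build a BFS-like forest: for each $i$, take a shortest-path tree $T_i$ in $G$ rooted at $x_i$ spanning its assigned class $A_i$. I need each $A_i$ to be closed under taking shortest paths toward $x_i$. That holds if assignment is done consistently: a vertex on a shortest $v$–$x_i$ path is at least as close to $x_i$, so it is covered by fire $i$. Ties with smaller indices need care; I would break them by letting each vertex take the fire that burns it earliest in time, then lowest index, and check the predecessor inherits a no-later fire.

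The trees $T_i$ form a spanning forest of $G$. Since $G$ is connected, add edges between components to get a spanning tree $T$. In $T$, $d_T(v,x_i)=d_G(v,x_i)\le k-i$ for $v\in A_i$, so the same sequence covers $T$, and by the prefix/substitution remark $b(T)\le k$.

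The main obstacle I expect is this second direction, specifically making the partition into classes $A_i$ consistent. Each $A_i$ must contain, for each of its vertices, a full shortest path back to $x_i$ within the class. I would resolve it by defining the class via the earliest burning time: the vertex on the path adjacent to $v$ was burned strictly earlier by some fire, and I take its parent from that fire. In other words, I build the forest directly from the process, giving each vertex burned by spreading a parent that was burned one round earlier. That parent edge lies in $G$, and the resulting structure is automatically a forest rooted at the sources, in which every vertex is reached in time.
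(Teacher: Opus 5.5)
The paper gives no proof of this statement; it is quoted from Bonato et al. Your argument is correct and is essentially their original one, in two parts:
\begin{enumerate}
\item Adding edges can only help: balls grow, and a planned source that is already burned is replaced by an arbitrary unburned vertex.
\item Conversely, an optimal burning of $G$ yields a spanning forest rooted at the sources. Attach each vertex burned by spreading at round $t(v)$ to a neighbor burned at round $t(v)-1$. A vertex in the tree of $x_i$ then has depth $t(v)-i\le k-i$, and joining these trees by edges of $G$ gives the required spanning tree (their ``rooted tree partition'').
\end{enumerate}

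When you write it up, go straight to that parent-pointer forest. Your first assignment (smallest $i$ with $d_G(v,x_i)\le k-i$) is not the fire that reaches $v$ first, and its classes can even miss their own root. For example, on the path $x_1,a,u,v,w$ with $k=3$ and sources $x_1,u,w$, the class of $u=x_2$ is $\{v\}$. Your earliest-time, lowest-index repair does work, but the process-based construction makes it unnecessary.

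In this direction no substitution is needed either: $d_T(x_i,x_j)\ge d_G(x_i,x_j)\ge i-j$ for $j<i$, so $(x_1,\dots,x_k)$ is itself a burning sequence of $T$. Finally, in your preliminary remark the legitimate sequence comes from substitution, not from taking a prefix.
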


    \item[4.] The relatively foremost open problem in the area of graph burning is known as the \emph{burning number conjecture}, which states that : 
    
    \begin{conj}\label{Conjecture} \emph{(\cite{Bonato1})} If G is a connected graph of order n, then $b(G) \leq \lceil \sqrt{n} \rceil. $
        \end{conj}
   
\end{itemize}

 Remark that, according to theorem~\ref{Reduction}, proving the conjecture for trees is sufficient to prove it for any graph $G.$ Also, if the conjecture holds, paths are then confirmed to be among connected graphs on $n$ vertices with the largest burning number. Lastly, as a reminder, if the conjecture holds for a graph $G$ on $n$ vertices, the graph $G$ is said to be well-burnable. 

While the full conjecture has so far remained unresolved, it is however known to hold for special classes of graphs. For example, the conjecture is proved for various subclasses of trees, including caterpillars, trees with at most one vertex of degree at least 3, and trees without degree 2 vertices~\cite{BoundsandHardness}; so is any graph with a \emph{Hamiltonian} path or cycle, which is a graph with a path or cycle that goes through each node exactly once. In such a case where the conjecture holds, we say the the corresponding graph is \emph{well-burnable.} In fact, as previously stated, one of the main results in this paper shows that connected \emph{split graphs} and connected $\{ K_2+2K_1, P_4, C_4\}$-free graphs are well-burnable.

A \emph{split} graph is one whose vertex set can be partitioned into a \emph{clique} and an \emph{independent} set of vertices~\cite{Dominanceorder}. That is, a subset of mutually adjacent vertices and a subset of mutually non-adjacent vertices, respectively. Some proper subfamilies of split graphs are well-known for their remarkable properties. For example, the \emph{threshold graphs}, first defined by Chv\'{a}tal and Hammer in~\cite{Chvatal77}, are split graphs that do not contain $2K_2, C_4,$ and $P_4$ as \emph{induced subgraphs.} That is, the process of vertex deletion on a threshold graph cannot produce any of the 3 graphs listed in figure~\ref{Threshold}: two independent edges, the cycle on four vertices, and the path on four vertices. Technically speaking, we say that threshold graphs are $\{2K_2, C_4, P_4\}$-free or the graphs listed in figure~\ref{Threshold} are forbidden subgraphs of threshold graphs. Note that threshold graphs can be constructed from a single vertex by repeatedly adding either an \emph{isolated} vertex (a vertex that is independent of all previous vertices) or a \emph{dominating} vertex (a vertex that is adjacent to all previous vertices). In general, split graphs are known to be $\{2K_2, C_4, C_5\}$-free, where $C_5$ denotes the cycle on five vertices.   

\bigskip

\begin{figure}[h!]
    \centering 

    \begin{tikzpicture}
 
        \node (0) [draw, shape= circle, fill=black, scale=.4] at (0,0) {};
        \node (1) [draw, shape= circle, fill=black, scale=.4] at (0,2) {};
        \node (2) [draw, shape= circle, fill=black, scale=.4] at (2,0) {};
        \node (3) [draw, shape= circle, fill=black, scale=.4] at (2, 2) {};

        \draw (0)--(1);
        \draw (2) --(3);

        \node (a) [draw, shape= circle, fill=black, scale=.4] at (5,0) {};
        \node (b) [draw, shape= circle, fill=black, scale=.4] at (5,2) {};
        \node (c) [draw, shape= circle, fill=black, scale=.4] at (7,0) {};
        \node (d) [draw, shape= circle, fill=black, scale=.4] at (7, 2) {};

        \draw (a)--(b)--(d)--(c)--(a);
    
       \node (e) [draw, shape= circle, fill=black, scale=.4] at (10,0) {};
        \node (f) [draw, shape= circle, fill=black, scale=.4] at (10,2) {};
        \node (g) [draw, shape= circle, fill=black, scale=.4] at (12,0) {};
        \node (h) [draw, shape= circle, fill=black, scale=.4] at (12, 2) {};

        \draw (e)--(g)--(h)--(f);

        \node at (1.2,-0.5) {$2K_2$} ;
        \node at (6.2,-0.5) {$C_4$} ;
        \node at (11.2,-0.5) {$P_4$} ;

     \end{tikzpicture}
    \caption{Forbidden subgraphs of threshold graphs}
    \label{Threshold}
\end{figure}

Also of interest to us is another class of graphs that is known to be $\{K_2+2K_1, C_4, P_4\}$-free~\cite{Dominanceorder}. In other words, graphs that do not contain any of the graphs listed in figure~\ref{Jean} as induced subgraphs. They are known in \cite{Dominanceorder} as \emph{threshold covered} graphs. In particular, we show that any connected $\{ K_2+2K_1, P_4, C_4\}$-free graph is well-burnable. Note that $\{ K_2+2K_1, P_4, C_4\}$-free graphs can be constructed sequentially from a $2K_2$ by repeatedly adding as many \emph{dominating} vertices as needed.

\bigskip

\begin{figure}[h!]
    \centering 

    \begin{tikzpicture}
 
        \node (0) [draw, shape= circle, fill=black, scale=.4] at (0,0) {};
        \node (1) [draw, shape= circle, fill=black, scale=.4] at (0,2) {};
        \node (2) [draw, shape= circle, fill=black, scale=.4] at (2,0) {};
        \node (3) [draw, shape= circle, fill=black, scale=.4] at (2, 2) {};

        \draw (0)--(1);

        \node (a) [draw, shape= circle, fill=black, scale=.4] at (5,0) {};
        \node (b) [draw, shape= circle, fill=black, scale=.4] at (5,2) {};
        \node (c) [draw, shape= circle, fill=black, scale=.4] at (7,0) {};
        \node (d) [draw, shape= circle, fill=black, scale=.4] at (7, 2) {};

        \draw (a)--(b)--(d)--(c)--(a);
    
       \node (e) [draw, shape= circle, fill=black, scale=.4] at (10,0) {};
        \node (f) [draw, shape= circle, fill=black, scale=.4] at (10,2) {};
        \node (g) [draw, shape= circle, fill=black, scale=.4] at (12,0) {};
        \node (h) [draw, shape= circle, fill=black, scale=.4] at (12, 2) {};

        \draw (e)--(g)--(h)--(f);

        \node at (1.2,-0.5) {$K_2+2K_1$} ;
        \node at (6.2,-0.5) {$C_4$} ;
        \node at (11.2,-0.5) {$P_4$} ;

     \end{tikzpicture}
    \caption{Forbidden subgraphs in $\{K_2+2K_1, C_4, P_4\}$-free graphs}
    \label{Jean}
\end{figure}

We close this section by highlighting a graph parameter known to have an inverse relationship with its diameter. Commonly referred to as the graph \emph{density}, it is a measure of how close a graph is to being \emph{complete}, which is to being a graph in which every pair of distinct vertices is connected. Formally speaking, it is the ratio of the number of existing edges to the total number of possible edges~\cite{Problem}. Thence, we have the following duality relation between a graph density and its diameter: higher density leads to lower graph diameter and vice versa.

We now proceed as follows. In the next section, we give a degree sequence characterization of all graphs with burning number 2. Then we proceed to compute the burning number of the entire class of split graphs as well as $\{ K_2+2K_1, P_4, C_4\}$-free graphs, which are known to be non-splits since they contain $2K_2.$ Lastly, in section 4, we highlight the work of an undergraduate research mentee, Tyriana Williams, in her effort to burn social media networks.

\section{Graph characterization \& burning number of graph classes}

We begin this section by establishing a necessary and sufficient condition that any graph $G$ with $b(G)=2$ must satisfy. 

\subsection{A necessary and sufficient condition}

\begin{thm}\label{Characterization} For any graph $G,$ we have that $b(G) = 2$ if and only if $\Delta(G) \geq n-2.$ That is, if and only if the maximum degree of $G$ is at least $n-2.$ \end{thm}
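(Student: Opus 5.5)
The plan is to prove both directions straight from the definition of the burning process, using a two-round burning sequence $(x_1,x_2)$. I will not use Theorems~\ref{bounds} or~\ref{Reduction}, since the graph need not be connected. Throughout, $n\geq 2$, because $G$ is non-trivial. As noted in the introduction, this already gives $b(G)\geq 2$: one round burns only a single vertex. So in both directions it is enough to compare $b(G)$ with $2$ from above.

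The key observation concerns what is burned after two rounds when the sources are $x_1$ and then $x_2$. In round $1$ only $x_1$ burns. In round $2$ every neighbour of $x_1$ catches fire, and the new source $x_2$ is also burned. Hence the set of burned vertices at the end of round $2$ is exactly $N[x_1]\cup\{x_2\}$, where $N[x_1]$ is the closed neighbourhood of $x_1$. It follows that $b(G)\leq 2$ holds if and only if some vertex $x_1$ satisfies $|V(G)\setminus N[x_1]|\leq 1$. The one subtlety is when $N[x_1]=V(G)$. Then a legal unburned second source may not exist, but the process is already finished after round $2$, so the length is still at most $2$.

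For the direction ($\Leftarrow$), let $\Delta(G)\geq n-2$ and let $v$ be a vertex of maximum degree. Then $N[v]$ misses at most one vertex.
\begin{itemize}
\item If $\deg v=n-1$, take $x_1=v$. Since $n\geq 2$, there is an unburned vertex available after round $1$; choose it as $x_2$. All of $V(G)$ is then burned at the end of round $2$.
\item If $\deg v=n-2$, let $w$ be the unique vertex not in $N[v]$. Take $x_1=v$ and $x_2=w$. Here $w$ is still unburned at round $2$ because it is not adjacent to $v$, so this is a legal choice, and $N[v]\cup\{w\}=V(G)$.
\end{itemize}
In either case $b(G)\leq 2$, and so $b(G)=2$. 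This argument also covers the case where $w$ is isolated or $G$ is disconnected.

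For the direction ($\Rightarrow$), suppose $b(G)=2$ with optimal sequence $(x_1,x_2)$. By the observation above, $V(G)=N[x_1]\cup\{x_2\}$. Hence $|N[x_1]|\geq n-1$, so $\deg x_1\geq n-2$ and therefore $\Delta(G)\geq n-2$.

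I do not expect a real obstacle. The only step that needs care is the bookkeeping of which vertices are burned by the end of round $2$, together with the legality of the choice of $x_2$ in the edge cases $\deg v=n-1$ and $w$ isolated.
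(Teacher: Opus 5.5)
Your proof is correct and takes essentially the same route as the paper. For necessity you note that after two rounds exactly $N[x_1]\cup\{x_2\}$ is burned, which forces $\deg x_1\geq n-2$; for sufficiency you take a maximum-degree vertex $v$ as $x_1$ and, when $\deg v=n-2$, its unique non-neighbour as $x_2$. You also spell out that $b(G)\geq 2$ when $n\geq 2$ and that the chosen $x_2$ is a legal (unburned) source, which slightly tightens the paper's argument.
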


\begin{proof} 

    \textbf{Necessity:}  Let $G$ be a graph such that $b(G)=2.$ Thus, by definition, there exists an optimal burning sequence $(x_1, x_2).$ Therefore, either $x_1$ is a dominating vertex or $x_1$ is adjacent to all vertices of $G$ except $x_2.$ That is, the degree of $x_1$ is either $n-1$ or $n-2.$ 

    \textbf{Sufficiency:} Now, let $G$ be a graph such that $\Delta(G) \geq n-2.$ That is, there exist $v \in V(G)$ such that  the degree of $v$ is at least $n-2.$ Suppose there is a vertex $v$ of degree $n-1.$ Therefore, $v$ is a dominating vertex. As a result, if we choose $v$ to be the first source of fire, then the entire graph will be burned on the second round of the burning process. Now suppose that $v$ is a max degree vertex of degree $n-2.$  That is, $v$ is adjacent to all vertices of $G$ but one. Let $v'$ be the vertex that is non-adjacent to $v.$ Therefore, $(v, v')$ is an optimal burning sequence of $G.$

    Hence the desired result.
\end{proof}

We now move on to compute the burning number of split graphs. In particular, we show that every connected split graph is well-burnable. 

\subsection{Burning number of split graphs}

Recall that a split graph is one whose vertex set can be partitioned into a \emph{clique} and an \emph{independent} set of vertices~\cite{Dominanceorder}. To simplify matter in what follows, we let $G = (C \cup I, E),$ where $E$ stands for the set of edges, $C$ is a maximal clique of $G,$ and $I$ the independent set formed by $V(G)\backslash C.$

We first show that any connected split graph is well-burnable.

\begin{thm}\label{Connectedsplit}
    Let $G=(C \cup I, E)$ be a connected split graph. Then $b(G) \leq 3.$ In particular, $b(G) =3$ only if $\Delta(G) < n-2.$
\end{thm}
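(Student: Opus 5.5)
Throughout, let $G=(C\cup I,E)$ be a connected split graph with $C$ a maximal clique. My plan is to bound the radius of $G$ and apply the upper bound $b(G)\le r+1$ of Theorem~\ref{bounds}, rather than build a burning sequence directly. The second assertion will then follow from Theorem~\ref{Characterization}.

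\textbf{Step 1: the radius is at most 2.} If $I=\emptyset$, then $G$ is complete and $b(G)\le 2$ (with $b(G)=1$ when $n=1$). Otherwise $C\neq\emptyset$: a single vertex of $I$ would otherwise have to be all of $G$ by connectivity, and that case is trivial. Pick any vertex $c\in C$.
\begin{itemize}
\item Every other vertex of $C$ is adjacent to $c$, since $C$ is a clique.
\item Every $u\in I$ has at least one neighbour, by connectivity when $n\ge 2$.
\item Since $I$ is independent, all neighbours of $u$ lie in $C$. So $u$ is adjacent to some $c'\in C$, and $d(c,u)\le d(c,c')+1\le 2$.
\end{itemize}
Hence the eccentricity of $c$ is at most $2$, so $r\le 2$, and Theorem~\ref{bounds} gives $b(G)\le r+1\le 3$.

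The one delicate point is the possibility $C=\emptyset$ or $|C|=1$ together with how $I$ sits inside $G$. Maximality of $C$ together with connectedness handles this: if $n\ge 2$ and $G$ is connected, then $G$ has an edge, so a maximal clique has at least two vertices.

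\textbf{Step 2: the characterization of $b(G)=3$.} Suppose $b(G)=3$. If $\Delta(G)\ge n-2$, then Theorem~\ref{Characterization} would give $b(G)=2$, a contradiction. Hence $b(G)=3$ forces $\Delta(G)<n-2$.

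\textbf{Step 3: well-burnability.} To tie this to the burning number conjecture, I would check $b(G)\le\lceil\sqrt n\rceil$.
\begin{itemize}
\item If $n\ge 5$, then $\lceil\sqrt n\rceil\ge 3\ge b(G)$.
\item If $n\le 4$ and $b(G)=3$, then $\Delta(G)<n-2\le 2$. So $G$ has maximum degree at most $1$, and is connected on $n\ge 3$ vertices. This is impossible.
\end{itemize}
Hence every connected split graph is well-burnable.

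I expect no real obstacle here. The main care points are the degenerate cases ($I$ empty, tiny $n$) and making sure that the eccentricity bound uses the fact that all neighbours of an $I$-vertex lie in $C$.
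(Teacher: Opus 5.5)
Your proof is correct and is essentially the paper's argument: every vertex of $I$ has a neighbour in the clique $C$, so any vertex of $C$ has eccentricity at most $2$, giving $b(G)\le r+1\le 3$ by Theorem~\ref{bounds}, and the clause about $b(G)=3$ is the contrapositive of Theorem~\ref{Characterization}. Your Step 3 is a useful addition, since it checks the cases $n\le 4$ on which the paper's claim ``consequently, connected split graphs are well-burnable'' tacitly relies.
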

\begin{proof}
    Let $G=(C\cup I, E)$ be a connected split graph. If $I$ is empty, then the desired result follows from theorem~\ref{Characterization}. Suppose now that $I$ is not empty. Then every vertex in $I$ is connected to a vertex in $C.$ Therefore, the radius of $G$ is at most 2. Therefore, the result follows from theorems~\ref{bounds} \&~\ref{Characterization}.
\end{proof}

Consequently, connected split graphs are well-burnable.
\begin{cor}
    Let $G$ be a connected threshold graph. Then $G$ is well-burnable.
\end{cor}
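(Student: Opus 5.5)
The plan is to reduce the corollary to Theorem~\ref{Connectedsplit}. Threshold graphs are split graphs: they are $\{2K_2, C_4, P_4\}$-free, and splitness can also be seen from the construction by isolated and dominating vertices. I will record the splitness directly from the construction, so as not to depend on a forbidden-subgraph characterization of split graphs that the paper does not state. Order the vertices as they are added and place each vertex added as dominating into $C$ and each vertex added as isolated into $I$; the initial vertex goes into $I$. Two dominating vertices are adjacent, because the later one is adjacent to all earlier ones, so $C$ is a clique. Two isolated vertices are non-adjacent: the later one has no neighbours among earlier vertices, and every vertex added after both is either isolated, hence adjacent to neither, or dominating, hence lies in $C$. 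So $I$ is independent and $G=(C\cup I,E)$ is a split graph. Taking $C$ maximal, as the paper's convention requires, only moves vertices from $I$ to $C$ and preserves this structure.

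Since $G$ is connected, Theorem~\ref{Connectedsplit} gives $b(G)\le 3$. It remains to compare this with $\lceil\sqrt{n}\rceil$, where $n=|V(G)|$.

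\textbf{Case $n\ge 5$.} Here $\lceil\sqrt{n}\rceil\ge 3\ge b(G)$, so $G$ is well-burnable.

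\textbf{Case $n\le 4$.} This is the main obstacle. We need $b(G)\le 2$ when $2\le n\le 4$, and $b(G)=1$ when $n=1$, which is trivial. Here I use a stronger structural fact about connected threshold graphs with $n\ge 2$: the last vertex added must be dominating. If it were isolated, it would have no neighbours at all and $G$ would be disconnected. So $\Delta(G)=n-1\ge n-2$, and Theorem~\ref{Characterization} gives $b(G)=2\le\lceil\sqrt{n}\rceil$.

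In fact this argument gives $b(G)\le 2$ for every connected threshold graph on at least two vertices, which handles all $n\ge 2$ at once. I will present it that way and keep the split-graph reduction only as a remark.
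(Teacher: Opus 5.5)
Your proof is correct, but the argument you end up presenting takes a different route from the paper's. The paper's proof is a one-line reduction. Threshold graphs are split; in fact they are split by the paper's own definition, so your constructive verification of splitness is unnecessary, though harmless. Theorem~\ref{Connectedsplit} then gives $b(G)\le 3$, using radius at most $2$ and $b(G)\le r+1$, and well-burnability is inherited from the remark that connected split graphs are well-burnable. That route only yields $b(G)\le 3$. Passing from it to $b(G)\le\lceil\sqrt{n}\rceil$ also needs the cases $n\le 4$, where one must check $\Delta(G)\ge n-2$; this holds for every connected graph on at most four vertices, but the paper leaves it implicit.

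Your final argument instead works directly from the incremental construction. In a connected threshold graph on $n\ge 2$ vertices, the last vertex added cannot be isolated, so it is dominating. Hence $\Delta(G)=n-1$, and Theorem~\ref{Characterization} gives $b(G)=2\le\lceil\sqrt{n}\rceil$. This bypasses split graphs entirely and handles all $n\ge 2$ at once with no case split. It also proves the sharper fact that every connected threshold graph on at least two vertices has burning number exactly $2$. It is the same dominating-vertex argument the paper later uses for $\{K_2+2K_1,P_4,C_4\}$-free graphs. The cost is reliance on the constructive characterization of threshold graphs, which the paper states but does not prove; the paper's route needs only the definition.

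One small wording point in your splitness paragraph: two vertices added as isolated stay non-adjacent simply because adding a vertex never creates edges among earlier vertices. Your remark about which later vertices lie in $C$ is beside the point.
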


\begin{proof}
    It follows from the fact that threshold graphs are split.
\end{proof}

We now proceed to compute the burning number of disconnected split graphs. First, it is not hard to see that, based on the structural definition of split graphs, disconnected split graphs are limited to at most one non-trivial connected split component. That is, a disconnected split graph is either a set of isolated vertices or a connected split component together with a set of isolated vertices. 

\begin{thm}
    Let $G$ be a disconnected split graph. 
    \begin{itemize}

    \item[Case 1:] If $G$ does not include a connected component, then $b(G) = n,$ where $n \in \mathbb{N}$ is the number of isolated vertices. 
    
    \item[Case 2:] If $G$ comprises a connected component, denoted $G_c,$ and a set of $n$ isolated vertices, then 
    \[ b(G)= \begin{cases} \leq 3, & \,\,  \emph{ if } n \leq 2 \\ n+1, & \,\, \emph{otherwise} \end{cases} \]  
    \end{itemize}
    
\end{thm}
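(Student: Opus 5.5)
The plan is to use two facts about the burning process. First, an isolated vertex can only be burned by being chosen as a source, since fire never reaches it through an edge. Second, a fire started in round $i$ of a process lasting $k$ rounds burns exactly the vertices within distance $k-i$ of its source.

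\textbf{Case 1.} If $G$ consists of $n$ isolated vertices, each one must be a source. Exactly one source is chosen per round, so at least $n$ rounds are needed. Choosing the vertices one at a time in any order finishes in exactly $n$ rounds, so $b(G)=n$.

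\textbf{Case 2, lower bound.} Let $G_c$ be the non-trivial connected split component. By Theorem~\ref{Connectedsplit} we have $b(G_c)\le 3$, and since $G_c$ has at least two vertices, $b(G_c)\geq 2$. The $n$ isolated vertices occupy $n$ of the sources, and at least one further source must lie in $G_c$. Hence $b(G)\ge n+1$ for every $n$.

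\textbf{Case 2, upper bound.} The idea is to light the fire in $G_c$ first, so that it has the most time to spread. Since $G_c$ has radius at most $2$ (the argument in the proof of Theorem~\ref{Connectedsplit}), choose a center $c$ of $G_c$ as $x_1$. Then choose the $n$ isolated vertices as $x_2,\dots,x_{n+1}$. The whole process lasts $n+1$ rounds, so the fire from $x_1$ spreads for $n$ rounds.
\begin{itemize}
\item If $n\ge 2$, the fire reaches every vertex within distance $2$ of $c$, which is all of $G_c$. Hence $b(G)\le n+1$.
\item If $n=1$, the fire from $x_1$ reaches only distance $1$. If $c$ dominates $G_c$, we still get $b(G)=2\le 3$. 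Otherwise, take the sequence $(c,v,w)$, where $v$ is a leftover vertex of $G_c$ at distance $2$ from $c$ and $w$ is the isolated vertex. This burns $G_c$ completely, since after three rounds the fire from $c$ has spread for two rounds and covers distance $2$. It gives $b(G)\le 3$.
\end{itemize}

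Combining the two bounds:
\begin{itemize}
\item For $n\ge 3$ (the ``otherwise'' branch), the bounds match and give exactly $b(G)=n+1$.
\item For $n\le 2$, the statement only claims $b(G)\le 3$. The upper bound gives $b(G)\le n+1\le 3$ when $n\geq 2$, and the $n=1$ argument above gives $b(G)\le 3$. So the claim holds in both subcases.
\end{itemize}

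The main obstacle is making the upper bound airtight. The sources must be distinct and not yet burned when chosen. The isolated vertices are never burned by spread, so they are always available as sources, and $x_1$ is chosen first, when nothing is burned. When $n=1$ and $G_c$ has a leftover vertex $v$ at distance $2$ from $c$, that $v$ is still unburned in round $2$: by then the fire from $c$ has reached only its neighbours, and $v$ is not one of them. So the choice $x_2=v$ is legal.

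A secondary point is the structural remark preceding the theorem: a disconnected split graph has at most one non-trivial component. I take it as given. Its justification is that two edges in different components would form an induced $2K_2$, which split graphs forbid.
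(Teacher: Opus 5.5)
Your proof is correct and takes the same route as the paper: each isolated vertex must be its own source, and the optimal sequence lights a vertex of $G_c$ first (a center, via the radius-at-most-$2$ argument behind Theorem~\ref{Connectedsplit}) and then the isolated vertices. You are more careful than the paper, which leaves implicit the matching lower bound $b(G)\ge n+1$, the separate treatment of $n=1$, and the check that each chosen source is still unburned; you prove all three.
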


\begin{proof} To prove case 1, it suffices to realize that every isolated vertex is its own source of fire. When it comes to case 2, since every isolated vertex is its own source of fire, the optimal strategy is to choose the first source of fire to be a vertex in the connected component and the other sources of fire to be isolated vertices. Then, the result follows from theorem~\ref{Connectedsplit} and since every isolated vertex is its own source of fire.

\end{proof}

\subsection{Burning number of \texorpdfstring{$\{ K_2+2K_1, P_4, C_4\}\text{-free}$}{threshold covered} graphs}

Lastly, we show that connected $\{ K_2+2K_1, P_4, C_4\}$-free graphs are well-burnable. 

\begin{thm}
    Let $G$ be a $\{ K_2+2K_1, P_4, C_4\}$-free graph. Then $b(G) \leq 3.$ Moreover, $b(G) = 3$ only if $G$ is a $2K_2,$ which is a disconnected graph.
    \end{thm}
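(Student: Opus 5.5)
I would prove the statement by first pinning down the structure of $\{K_2+2K_1, P_4, C_4\}$-free graphs. I would not lean only on the remark that they arise from $2K_2$ by repeatedly adding dominating vertices; instead I would rederive the structure, separating the connected and disconnected cases. Once the structure is known, the burning number should follow from Theorem~\ref{Characterization} or from an explicit three-round burning sequence.

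\textbf{Connected case.} Suppose $G$ is connected on $n\ge 2$ vertices. Since $G$ is $P_4$-free, it is a cograph, so its complement is disconnected. Hence $V(G)=A\cup B$ with $A,B$ nonempty and every vertex of $A$ adjacent to every vertex of $B$. If $A$ contained two non-adjacent vertices $a,a'$ and $B$ contained two non-adjacent vertices $b,b'$, then $a\,b\,a'\,b'$ would be an induced $C_4$, which is excluded. So one side, say $A$, is a clique. Any vertex of $A$ is then adjacent to everything, so it is dominating, $\Delta(G)=n-1$, and Theorem~\ref{Characterization} gives $b(G)=2$. This is the key step: it shows that every connected graph in the class is well-burnable.

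\textbf{Disconnected case.} Here I would use $K_2+2K_1$-freeness to bound the shape of $G$. Suppose some component $H$ contains an edge $uv$. Any two vertices outside $H$ that are non-adjacent would form an induced $K_2+2K_1$ with $uv$. Therefore the vertices outside $H$ are pairwise adjacent, which leaves two possibilities:
\begin{itemize}
\item[(i)] $H$ plus a single isolated vertex $w$;
\item[(ii)] exactly two components, and by symmetry both are cliques, that is, $G=K_a+K_b$.
\end{itemize}
In case (i), $H$ is connected and so has a dominating vertex $x$ by the connected case, and $(x,w)$ burns $G$ in two rounds. In case (ii), a sequence $(x_1,x_2,x_3)$ with $x_1\in K_a$ and $x_2\in K_b$ finishes by round $3$. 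Moreover, $b(G)=3$ exactly when $a,b\ge 2$, because then $\Delta(G)\le n-3$ and Theorem~\ref{Characterization} rules out $b(G)=2$. The remaining disconnected graphs are edgeless.

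\textbf{Main obstacle.} The main difficulty is reconciling this classification with the precise wording of the statement. Read literally, edgeless graphs $mK_1$ with $m\ge 4$ are $\{K_2+2K_1,P_4,C_4\}$-free and have $b=m$, and $K_a+K_b$ with $a,b\ge 2$ gives $b=3$ without being $2K_2$. So I would carry out the proof for the class as the paper describes it: $2K_2$ together with the graphs obtained from it by adding dominating vertices. For $2K_2$ itself, two fires cannot cover both edges within two rounds, since $\Delta=1=n-3$, while $(x_1,x_2,x_3)$ as in case (ii) works, so $b=3$. Every graph obtained by adding at least one dominating vertex has $b=2$ by Theorem~\ref{Characterization}. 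In the write-up I would state this restriction explicitly, observing that the connected case already yields the well-burnability claim, which is what the section needs.
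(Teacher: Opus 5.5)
Your proposal is correct, and it takes a genuinely different and more rigorous route than the paper. The paper's proof makes two unsupported assertions. First, a connected $\{K_2+2K_1,P_4,C_4\}$-free graph has a dominating vertex, so $b(G)=2$ by Theorem~\ref{Characterization}. Second, ``otherwise'' $G$ is $2K_2$. Both rest implicitly on the introduction's claim that the class consists of $2K_2$ with dominating vertices added.

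You instead derive the dominating vertex from the forbidden subgraphs. A connected $P_4$-free graph is a complete join of two nonempty parts, and $C_4$-freeness forces one part to be a clique. This supplies the justification the paper omits. It also shows that $K_2+2K_1$-freeness is not needed for the connected case, which is the case behind the section's well-burnability claim.

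Your $K_2+2K_1$ argument for the disconnected case is also right. It shows the paper's ``otherwise'' step is false for the class defined by forbidden subgraphs. The graph $mK_1$ with $m\ge 4$ violates $b(G)\le 3$. The graph $K_a+K_b$ with $a,b\ge 2$ and $(a,b)\ne(2,2)$ violates the ``only if $2K_2$'' clause. You may also add $3K_1$ to your list: it has burning number $3$ and is not $2K_2$.

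So your route yields the correct statement for the full class:
\begin{itemize}
\item burning number $2$ for connected members on at least two vertices and for members of the form $H+K_1$;
\item burning number $3$ exactly for $K_a+K_b$ with $a,b\ge 2$ and for $3K_1$;
\item burning number $n$ for edgeless graphs.
\end{itemize}
The paper's route is shorter but valid only on its restricted constructive class. There, as you note, the theorem reduces to $b(2K_2)=3$ and $b=2$ once a dominating vertex is present. Stating that restriction explicitly, as you propose, is the right way to recover the theorem as written.
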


    \begin{proof}
        Let $G$ be a connected $\{ K_2+2K_1, P_4, C_4\}$-free graph. Thus, $G$ has at least a dominating vertex. Then  by theorem~\ref{Characterization}, $b(G) = 2.$ Otherwise, $G$ comprises two independent edges and $b(G)= 3.$ 
    \end{proof}

Consequently, connected $\{ K_2+2K_1, P_4, C_4\}$-free graphs are well-burnable.

\section{Burning social media networks}

This section is dedicated to showcase some of the work performed by my undergraduate research mentee, Tyriana Williams, in the summer of 2025. The main objective was to come up with a blueprint on how to burn social media networks. 

Since social network data of large platforms such as Facebook or Instagram is not available to the general public due to privacy policies, a Python program was created to randomly generate graphs that simulate the aforementioned networks. When the program runs, the user is prompted to enter how many users they want in their social network. o access the Jupyter Notebook code, visit the GitHub repository\footnote{\url{https://github.com/JeanGuil/social-networks-graph.git}}.

To evaluate the structural properties of randomly generated social media networks, the Python program was executed several times, each time with an increasing number of users in the network: 5, 10, 25, and 50. For each run, the resulting graph was visualized and analyzed. Key graph-theoretic parameters were computed, including the number of vertices, number of edges, diameter, radius, and center of the graph. The following are four distinct simulations.  

\subsection{5-user social media network simulation}

The simulation below corresponds to a sparse network of 5 users. The resulting graph is a low density one. Using brute force, it can be determined that the \textit{burning number} of the graph is 3, which can be verified using both theorems~\ref{bounds} and~\ref{Pathresult}. More specifically, we have that:
$$\lceil \sqrt{4 + 1} \rceil \leq b(G) \leq 2 + 1   \,\, \mbox{ and } \,\, b(G) = \lceil \sqrt{5} \rceil $$

\begin{figure}[h]

\begin{subfigure}{0.5\textwidth}

\centering
\includegraphics[width=0.6\linewidth, height=5cm, scale=3]{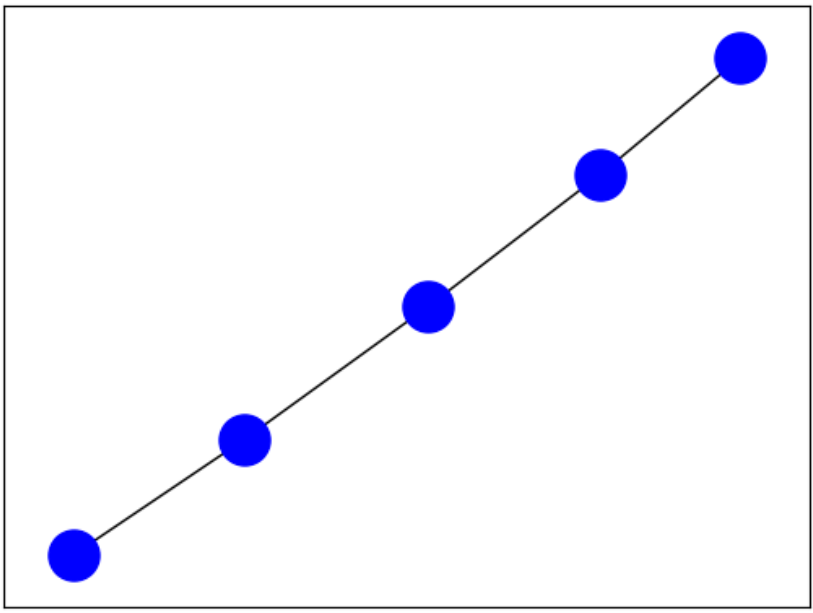} 
\end{subfigure}
{\hskip0.5cm} 
\begin{subfigure}{0.5\textwidth}
\begin{tabular}{|c|c|c|c|} \hline $n$ & Rad & Diam & $b(G)$ \\ \hline 5 & 2 & 4 & 3 \\ \hline  \end{tabular}
\bigskip
\bigskip
\bigskip
\end{subfigure}

\caption{Simulation of a 5-user social media network }
\label{fig:image2}
\end{figure}

\subsection{10-user social media network simulation}

The simulated network below comprises 10 users. Visibly, it is less sparse and has a higher density than the previous one. Again, using brute force, it can be determined that the \textit{burning number} of the graph is 3, which can be verified using theorem~\ref{bounds}. More specifically, we have that:
$$\lceil \sqrt{4 + 1} \rceil \leq b(G) \leq 2 + 1$$

\begin{figure}[h]

\begin{subfigure}{0.5\textwidth}

\centering
\includegraphics[width=0.6\linewidth, height=5cm, scale=3]{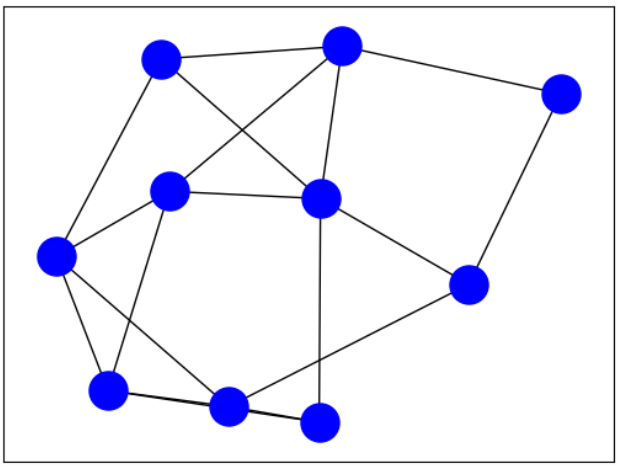} 
\end{subfigure}
{\hskip0.5cm} 
\begin{subfigure}{0.5\textwidth}
\begin{tabular}{|c|c|c|c|} \hline $n$ & Rad & Diam & $b(G)$ \\ \hline 10 & 2 & 3 & 3 \\ \hline  \end{tabular}
\bigskip
\bigskip
\bigskip
\end{subfigure}

\caption{Simulation of a 10-user social media network }
\label{fig:image2}
\end{figure}

\subsection{25-user social media network simulation}

This time, the simulated network encompasses 25 users. The resulting graph is one with a higher density than the two previous ones. The diameter is numerically estimated at 3 and radius at 2, which leads to the burning number being less or equal to 3 by theorem~\ref{bounds}.
$$\lceil \sqrt{3+ 1} \rceil \leq b(G) \leq 2 + 1$$

\begin{figure}[h]

\begin{subfigure}{0.5\textwidth}

\centering
\includegraphics[width=0.6\linewidth, height=5cm, scale=3]{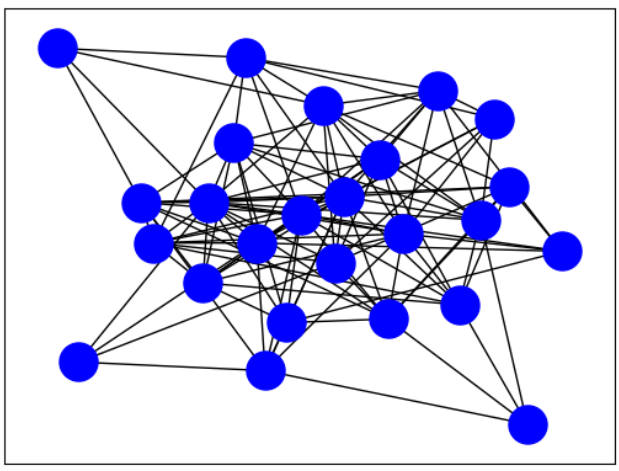} 
\end{subfigure}
{\hskip0.5cm} 
\begin{subfigure}{0.5\textwidth}
\begin{tabular}{|c|c|c|c|} \hline $n$ & Rad & Diam & $b(G)$ \\ \hline 25 & 2 & 3 & $\leq 3$\\ \hline  \end{tabular}
\bigskip
\bigskip
\bigskip
\end{subfigure}

\caption{Simulation of a 25-user social media network }
\label{fig:image2}
\end{figure}

\subsection{50-user social media network simulation}

This final simulation corresponds to a network of 50 users. It produces a high density graph which leads to a smaller diameter and radius. Intuitively, we expect a low burning number due to the inverse relationship between density and diameter recorded in real-world networks. This is known as the \emph{small-world} phenomenon where any two entities in the network are likely to be connected through a short sequence of intermediate acquaintances~\cite{Smallworld}. Indeed, using theorem~\ref{bounds}, we have that $b(G) =2.$ 
$$\lceil \sqrt{2+ 1} \rceil \leq b(G) \leq 1 + 1$$

\begin{figure}[h]

\begin{subfigure}{0.5\textwidth}

\centering
\includegraphics[width=0.6\linewidth, height=5cm, scale=3]{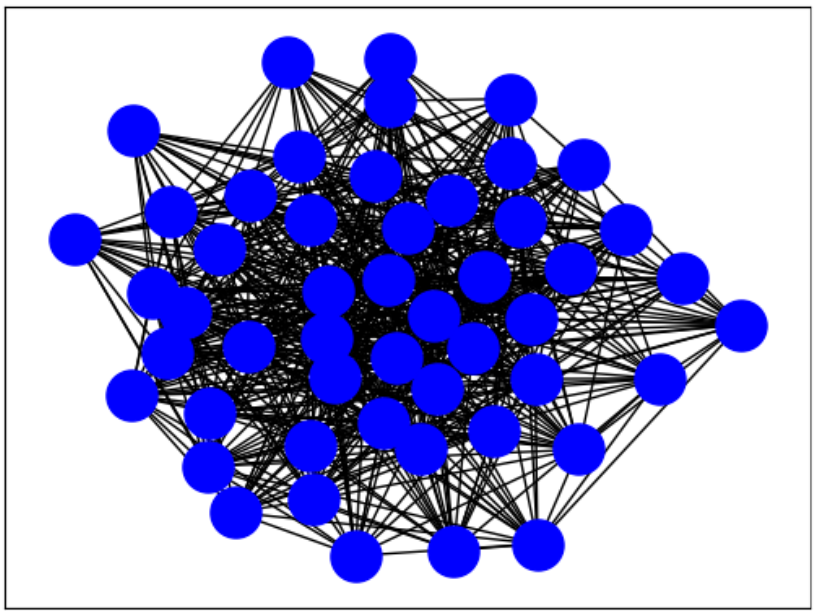} 
\end{subfigure}
{\hskip0.5cm} 
\begin{subfigure}{0.5\textwidth}
\begin{tabular}{|c|c|c|c|} \hline $n$ & Rad & Diam & $b(G)$ \\ \hline 50 & 1 & 2 & 2 \\ \hline  \end{tabular}
\bigskip
\bigskip
\bigskip
\end{subfigure}

\caption{Simulation of a 50-user social media network }
\label{fig:image2}
\end{figure}

\bigskip
 In conclusion, when it comes to understanding how fast information or influence spreads within a network, we have these two main takeaways from the above simulations: 1) higher density corresponds to smaller diameter and radius, which translates into smaller burning number by theorem~\ref{bounds} and 2) an increase in connectivity translates into a decrease in burning number, hence theorem~\ref{Reduction}. Takeaway 1 confirms what should be an intuitive fact: highly connected social networks facilitate faster information diffusion. Equivalently, information spreads quickly and efficiently in close-knit, cohesive groups. Takeaway 2 tells however a deeper story, which is the strategy to burn a graph still works when edges are added.

\section{Conclusion and future directions}

In this paper, we give a necessary and sufficient condition that all graphs with burning number equal to 2 must satisfy. More specifically, we prove that, for any graph of order $n,$ we have $b(G) = 2$ if and only if $G$ has a vertex of degree at least $n-2.$ We successfully compute the burning number for the entire class of split graphs as well as $\{ K_2+2K_1, P_4, C_4\}$-free graphs, which are known to be non-split since they contain $2K_2.$ Moreover, we prove that connected split graphs and connected $\{ K_2+2K_1, P_4, C_4\}$-free graphs are well-burnable. Lastly, the inverse relationship between a graph connectivity and its burning number is illustrated through simulations. 

Naturally, the next logical step is to characterize all graphs with burning number greater than 2. Being able to do so will also bring full resolution to long-standing related problems such as the characterization of graphs with prescribed radius, diameter, and center. To this aim, we must first remind ourselves of Bonato's warning that "burning a graph is hard"~\cite{Bonato2}, let alone characterize all graphs of a specific burning number. By the same token, we, mathematicians, don't choose to do something because it is easy. Here we go.

\section{Acknowledgments}
The realization of this project would not be possible without the critical support of the Provost Tenure-Readiness Research program @ Norfolk State University. Under this initiative and the leadership of Dr. Khadijah O. Miller, selected tenure-track faculty such as Dr. Jean Guillaume were provided with a generous summer stipend to engage in summer research along with the opportunity to mentor undergraduate student(s). Also, the 2026 Faculty Writing Network launched by the Deans from all  NSU Schools and Colleges together with Center for Teaching and Learning provided the extra writing motivation needed to complete the project. Last but not least, special thanks to my fellow node members of the writing network, Dr. Jocelyn Heath and Dr. Jessica M. Woltz, for their constructive feedback.

\vspace{3cm}


\end{document}